\documentclass{amsart}

\usepackage[english]{babel}
\usepackage[T1]{fontenc}
\usepackage[letterpaper,top=2cm,bottom=2cm,left=3cm,right=3cm,marginparwidth=1.75cm]{geometry}

\usepackage{amsmath}
\usepackage{graphicx}
\usepackage[colorlinks=true, allcolors=blue]{hyperref}

\usepackage[T2A]{fontenc}
\usepackage[utf8]{inputenc}
\usepackage{amsthm, amsmath, amsfonts, amssymb, mathtools}

\usepackage{titleps}
\usepackage{soulutf8}
\usepackage{fontsize}
\usepackage{bbm}
\usepackage{cite}

\usepackage[mathscr]{eucal}

\usepackage{mathtools}
\usepackage{enumerate}
\usepackage{accents}
\usepackage{comment}
\usepackage{dsfont}

\usepackage{xcolor}
\usepackage[colorlinks=true]{hyperref}
\hypersetup{urlcolor=blue,citecolor=red,linkcolor=blue}
\usepackage[initials]{amsrefs}
\newtheorem{theorem}{Theorem}[section]
\newtheorem{lemma}{Lemma}[section]
\theoremstyle{remark}

\theoremstyle{problem}
\newtheorem*{problem}{Problem}
\newtheorem*{definition}{Definition}

\newcommand{\N}{\mathbb N}

\newcommand{\sgn}{\mathtt{sgn}}
\begin{document}
\title{Plasticity of the unit sphere of the space $c_0$}
\author{Olesia Zavarzina}
\address{Department of Mathematics and Informatics, V. N. Karazin Kharkiv National University, 61022 Kharkiv, Ukraine.}
\email{olesia.zavarzina@yahoo.com}
\begin{abstract}
In this paper, we prove that the unit sphere of the space $c_{0}$ is plastic, i.e., every non-expansive bijection from this sphere onto itself is an isometry.
\end{abstract}
\subjclass[2020]{46B20, 46C05}
\keywords{Non-expansive map, non-contractive map, unit sphere, isometry, expand-contract plastic space}
\thanks{ The author was supported  by the National Research Foundation of Ukraine, project No. 2025.07/0369 ``Qualitative methods of nonlinear analysis of heterogeneous structures''.}
\maketitle

\section{Introduction}
\begin{definition}
   A metric space $(M, d)$ is called  \emph{ plastic}  if every bijective non-expansive map $F \colon M \to M $ is an isometry.
\end{definition}
Recall that a map $F$ is called non-expansive (non-contractive) if for any $x,y\in M$
$$d(F(x),F(y))\leq d(x,y).$$
A map $F$ is called non-contractive if for any $x,y\in M$ the inverse inequality holds.
In \cite{CKOW},  Cascales, Kadets, Orihuela, and Wingler posed the following question:
\begin{problem}
    Is it true that the unit balls of all Banach spaces are plastic?
\end{problem}
This question has a number of partial positive results (see, for example, \cite{CKOW, KadZav, Leo, HLZ, Fak} and references therein), but is still open in full generality in infinite dimensions. In finite dimensions, the answer is positive due to the result on totally bounded metric spaces obtained by Naimpally, Piotrowski, and Wingler \cite{NPW}.  The problem of plasticity of the unit balls of Banach spaces may be considered as a continuation of the well-known results of Mazur–Ulam \cite{MU} extended by Mankiewicz \cite{Manc}.
\begin{theorem}(Mazur–Ulam-Mankiewicz.)
Every bijective isometry between the unit balls $B_{X}$ and $B_{Y}$ extends to a linear bijective isometry from $X$ onto $Y$. 
\end{theorem}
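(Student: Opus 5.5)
The plan is to follow Mankiewicz's strategy: show that $F\colon B_X\to B_Y$ preserves midpoints, deduce that $F$ is affine, then show $F(0)=0$ and extend by homogeneity.

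\textbf{Step 1 (local midpoint preservation).} Fix $a,b$ in the interior of $B_X$ with midpoint $z=\frac{a+b}{2}$, and assume $a-b$ is small enough that the ball $B(z,2\|a-b\|)$ lies in $B_X$. Likewise require that $B(F(z),2\|a-b\|)$ lies in $B_Y$. Then run the metric characterization of the midpoint behind the Mazur--Ulam theorem. Let $d=\|a-b\|$ and set
\[
H_1=\{x:\ \|x-a\|=\|x-b\|=d/2\}.
\]
Inductively, let $H_{n+1}$ be the set of $x\in H_n$ with $\|x-y\|\le \frac12\operatorname{diam}H_n$ for all $y\in H_n$. The map $x\mapsto a+b-x$ preserves each $H_n$. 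Hence $\operatorname{diam}H_{n+1}\le\frac12\operatorname{diam}H_n$, and $\bigcap_n H_n=\{z\}$. All these sets lie in $B(z,d)$, and hence inside the ball. Since $F$ is a bijective isometry, it maps the sets $H_n$ built from $a,b$ onto the analogous sets built from $F(a),F(b)$. Therefore $F(z)=\frac{F(a)+F(b)}{2}$.

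\textbf{Step 2 (affinity).} This is the main obstacle: the midpoint property is only local, and one must guarantee that the auxiliary sets stay inside the image ball too.
\begin{itemize}
\item To guarantee the image condition, I would first work on the interior. Points at distance $r$ from the sphere map to points at distance at most $r$ from the sphere of $B_Y$, and I would first try to prove this by applying the same metric arguments to $F^{-1}$. Failing that, I would work on small balls around an arbitrary interior point, where the inclusion is automatic by continuity.
\item Local midpoint preservation plus continuity makes $F$ affine on every small convex neighbourhood in the interior of $B_X$. Dyadic points on a segment are handled first, and continuity gives the rest.
\item A locally affine map on a convex open set is affine. This follows by chaining overlapping neighbourhoods along segments.
\item Continuity then extends affinity to all of $B_X$.
\end{itemize}

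\textbf{Step 3 ($F(0)=0$).} Now $F$ is an affine bijection of $B_X$ onto $B_Y$. The map $G(y)=F(-F^{-1}(y))$ is an affine involutive isometry of $B_Y$ onto itself. Its fixed point is $F(0)$, and it has the form $y\mapsto 2F(0)-y$. A bounded set symmetric about two distinct centers $c_1\ne c_2$ is invariant under translation by $2(c_1-c_2)$, which is impossible for a bounded set. Since $B_Y$ is symmetric about $0$, this forces $F(0)=0$. Consequently $F$ is the restriction of a linear map on the ball.

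\textbf{Step 4 (extension).} Define $T(x)=rF(x/r)$ for any $r\ge\|x\|$, $r>0$. Linearity on the ball makes this independent of $r$ and linear. $T$ is an isometry because $F$ is. It is surjective because $F(B_X)=B_Y$.
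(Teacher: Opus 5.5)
The paper gives no proof of this statement; it is quoted as classical background (Mazur--Ulam, Mankiewicz), so your proposal has to stand on its own. Its architecture is the standard one: metric characterization of midpoints, local then global affinity, $F(0)=0$, homogeneous extension. Steps 1, 3 and 4 are sound.

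\textbf{The gap.} It sits in the first bullet of Step 2, on which everything else depends. Step 1 can be invoked for a pair $a,b$ only if \emph{both} $B(z,2d)\subset B_X$ and $B(F(z),2d)\subset B_Y$. So you need $F$ to send interior points of $B_X$ to interior points of $B_Y$, with a quantitative margin, and you never establish this.

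\begin{itemize}
\item ``Applying the same metric arguments to $F^{-1}$'' is not an argument. The sets $H_n$ only locate midpoints and carry no information about distance to the sphere.
\item The fallback, that the inclusion is ``automatic by continuity'' on small balls around an interior point $z_0$, is false. Continuity says nothing about whether $\|F(z_0)\|<1$. If $\|F(z_0)\|=1$, then no ball about $F(z_0)$ lies in $B_Y$, and Step 1 is unavailable near $z_0$ for every pair $a,b$.
\item Ruling this out topologically would need an invariance-of-domain statement, which fails in infinite dimensions. So some genuinely metric input is required.
\end{itemize}

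\textbf{The fix.} The missing lemma is short: for every $x\in B_X$,
\[
\sup_{y\in B_X}\|x-y\| = 1+\|x\| .
\]
The upper bound is the triangle inequality, and it is attained at $y=-x/\|x\|$ (or at any unit vector if $x=0$). Since $F$ is a surjective isometry, $\sup_{y\in B_X}\|x-y\|=\sup_{w\in B_Y}\|F(x)-w\|$. Hence $\|F(x)\|=\|x\|$ for all $x\in B_X$.

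Now $B(z,2d)\subset B_X$ means $\|z\|+2d\le 1$, which gives $B(F(z),2d)\subset B_Y$. This margin also covers the image-side sets. They lie in the closed ball of radius $d/2$ about $\frac{F(a)+F(b)}{2}$, and that point is within distance $d$ of $F(z)$.

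With this lemma in hand, the rest of your plan goes through: local affinity on small balls in the open unit ball, chaining along segments, extension to $B_X$ by density, and Step 4. Moreover, $F(0)=0$ follows immediately from norm preservation, so Step 3, though correct, becomes unnecessary.
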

In 1987, Tingley \cite{Ting} raised the question of whether the unit balls in the aforementioned theorem could be replaced by unit spheres:
\begin{problem} (Tingley). Let $f \colon S_X \to S_Y$ be a bijective isometry between the unit spheres of two Banach spaces $X$ and $Y$. Does $f$ admit an extension to a linear isometry between $X$ and $Y$?
\end{problem}
Despite numerous partial results for specific spaces (see, for example, \cite{KadMar}, \cite{Tan}), Tingley's problem remains a challenging open question in dimensions three and higher. Moreover, it has been fully resolved in the two-dimensional setting by Banakh only in 2022 \cite{Banakh}. As its natural continuation, one may consider the plasticity problem for the unit spheres of Banach spaces, posed by V. Kadets in personal communication, 2024.
\begin{problem}(Kadets.)
    Is it true that the unit spheres of all Banach spaces are plastic?
\end{problem}
In \cite{LevZav} this question was answered positively for Hilbert spaces, $\ell_1$, $\ell_{\infty}$, and $c$. Observe also, for the unit ball of the space $c_0$ only conditional plasticity was shown. Namely, a bijective non-expansive map $F \colon B_{c_0} \to B_{c_0} $ is an isometry if $F^{-1}$ is continuous \cite{Leo}. It is currently unknown whether this additional condition may be omitted. As the title says, the main objective of this paper is to prove the plasticity (without any additional conditions) of $S_{c_0}$. 

To facilitate our analysis, we introduce the necessary notation and structural definitions used throughout the text. For a real Banach space $X$, let $S_X$ and $B_X$ denote its unit sphere and closed unit ball, respectively. 
In the context of the classical sequence space $c_0$, an element $x \in S_{c_0}$ is represented as a sequence $x = (x^1, x^2, \dots)$ such that $\lim_{n\to\infty} x^n = 0$ and $\max_{n\in\mathbb{N}} |x^n| = 1$. 

We denote by $S_{c_0}^*$ the dense subset of $S_{c_0}$ consisting of all elements with finite support. For any vector $x \in c_0$, its support is defined as the set of indices corresponding to non-zero coordinates:
\begin{equation*}
    \operatorname{supp}(x) = \{n \in \mathbb{N} : x^n \neq 0\}.
\end{equation*}
Thus, $x \in S_{c_0}^*$ if and only if $|\operatorname{supp}(x)| < \infty$. 

Furthermore, let $\{e_n\}_{n \in \mathbb{N}}$ be the standard unit vector basis of $c_0$, where $e_n$ has $1$ at the $n$-th position and zeros elsewhere. We define $E$ as the set consisting of all standard basis vectors and their negatives:
\begin{equation*}
    E = \{\pm e_n : n \in \mathbb{N}\}.
\end{equation*}
Evidently, $E \subset S_{c_0}^* \subset S_{c_0}$. For any real number $t \in \mathbb{R}$, its signum function is denoted by $\sgn(t)$.

The paper is organized as follows. In the second section, we collect and modify some results known for the unit spheres of some other sequence spaces or for $B_{c_{0}}$ to adapt them to the case of $S_{c_{0}}$. These results imply that any bijective non-expansive mapping is tightly bound to a coordinate permutation and a sign-switching sequence. Roughly speaking, we ``turn off'' this coordinate permutation and sign-switching and obtain a function $\widetilde{F}$. This section also includes some previously known or slightly modified  properties of this function. Section 3 is devoted to further investigation of the properties of $\widetilde{F}$, which finally allowed us to see that $\widetilde{F}$  reduces to identity, which in turn means that $F$ reduces to an isometry.

\section{Preliminaries}
 The analog of the first lemma of this section is  Lemma 3.1 from \cite{Leo}, which was proved for $B_{c_0}$ and $B_{c}$.   The ``only if'' part was also proved in Lemma  4.1 in \cite{LevZav} for the unit spheres of $\ell_{\infty}$ and $c$.  Although this result cannot be considered as a completely new one, for the reader's convenience, we give it with proof. 
\begin{lemma}\label{linf1}
   The points $p,q \in S_{c_0}$ satisfy the condition that for every $x \in S_{c_0}$
    \begin{equation*}
        (\|x-p\|\leq1)\vee(\|x-q\|\leq1),
    \end{equation*}
    if and only if $p \in E$ and $q=-p$.
\end{lemma}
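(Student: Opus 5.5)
The proof splits into the two implications.

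\emph{The ``if'' part.} Let $p=\varepsilon e_n$ with $\varepsilon=\pm1$ and $q=-p$. For any $x\in S_{c_0}$ we have $\|x-p\| = \max\{|x^n-\varepsilon|,\ \sup_{k\neq n}|x^k|\}\le \max\{|x^n-\varepsilon|,1\}$. The same bound holds for $q$ with $-\varepsilon$ in place of $\varepsilon$. Since $|x^n|\le 1$, at least one of the numbers $\varepsilon x^n$, $-\varepsilon x^n$ is nonnegative. If $\varepsilon x^n\ge 0$ then $|x^n-\varepsilon|=1-\varepsilon x^n\le 1$; otherwise $|x^n+\varepsilon|\le 1$. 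Hence $\|x-p\|\le1$ or $\|x-q\|\le1$.

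\emph{The ``only if'' part.} Suppose $p,q\in S_{c_0}$ have the stated covering property. The plan is to build test points $x\in S_{c_0}$ that lie at distance greater than $1$ from both $p$ and $q$ whenever $p\notin E$ or $q\neq -p$. The basic tool is that for $x$ with $x^k=\pm1$ we get $\|x-p\|\ge |x^k-p^k|$, and this exceeds $1$ as soon as $p^k\neq0$ has the opposite sign to $x^k$. We proceed in three steps.

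\textbf{Step 1: $q=-p$ on the supports.} Suppose some coordinate $k$ has $p^k\neq0$ but $q^k$ is not of opposite sign, that is, $\sgn(q^k)\in\{0,\sgn(p^k)\}$.
\begin{enumerate}
\item If $\sgn q^k=\sgn p^k$, take $x=-\sgn(p^k)e_k$. Then $x$ is far from both points through coordinate $k$.
\item If $q^k=0$, choose $m$ with $q^m\neq0$ (it exists because $\|q\|=1$). Put $x=-\sgn(p^k)e_k-\sgn(q^m)e_m$, or use $x^k$ alone when $m=k$ is impossible. Then $\|x-p\|\ge|x^k-p^k|>1$ and $\|x-q\|\ge|x^m-q^m|>1$.
\end{enumerate}
Either way we contradict the hypothesis. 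So on $\operatorname{supp}p\cup\operatorname{supp}q$ the signs of $p$ and $q$ are opposite and both are nonzero. In particular the supports coincide.

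\textbf{Step 2: $p$ has exactly one nonzero coordinate.} Suppose $k\neq m$ are both in the support. Take $x=-\sgn(p^k)e_k+\sgn(p^m)e_m$. Coordinate $k$ makes $x$ far from $p$, and coordinate $m$ makes $x$ far from $q$, since $\sgn q^m=-\sgn p^m$. This is a contradiction. Hence $p=\alpha e_n$ and $q=\beta e_n$ with $\alpha\beta<0$, and $\|p\|=\|q\|=1$ forces $p=\pm e_n$ and $q=-p$.

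\textbf{Main obstacle.} The only real care is in case (2) of Step 1, where it must be checked that $m\neq k$. This holds because $q^k=0$ while $q^m\neq0$.
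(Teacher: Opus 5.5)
Your proof is correct and uses the same device as the paper: test vectors with $\pm1$ entries placed against the signs of $p$ and $q$. The only difference is the order. You first force opposite signs on a common support (extending Step 1 from $\operatorname{supp} p$ to $\operatorname{supp} p\cup\operatorname{supp} q$ tacitly uses the symmetry of the hypothesis in $p,q$, which is legitimate) and then a singleton support, whereas the paper first forces singleton supports and then rules out $q\neq -p$.
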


\begin{proof}
Let us first show the ``if'' part. Suppose that without loss
of generality $p= e_n$, $q=-e_n$ and fix arbitrary $x\in S_{c_0}$. If $x^n=0$, then both inequalities from the lemma statement hold.  If $x^n >0$, then $\|x-p\|\leq1$, otherwise  $\|x-q\|\leq1$.

    Conversely, let us prove the ``only if'' part. Suppose by contradiction that $|\mathtt{supp}(p)| > 1$. Then there exist two distinct indices $k$ and $j$ such that $p^k \neq 0$ and $p^j \neq 0$ and an index $l$, which without loss of generality we may assume different from $k$, such that $q^l \neq 0$.  Let us construct a vector $x \in S_{c_0}$ as follows: set $x^k = -\mathtt{sgn}(p^k)$, and $x^l = -\mathtt{sgn}(q^l)$. All other coordinates of $x$ are set to zero. Then we clearly obtain:
\begin{equation*}
    \|x-p\| \geq |x^k - p^k| = 1 + |p^k| > 1,
\end{equation*}
and by the choice of $x^l$, we also have $\|x-q\| \geq |x^l - q^l| > 1$, which contradicts the assumption. 

Thus, $|\mathtt{supp}(p)|=|\mathtt{supp}(q)| = 1$. Since $p,q \in S_{c_0}$, this implies $p = \pm e_n$ for some $n \in \mathbb{N}$, meaning $p \in E$. By symmetry, we also get $q \in E$, so $q = \pm e_m$. If $m \neq n$, we can easily find $x \in S_{c_0}$ to violate the condition. Finally, if $q = p$, then choosing $x = -p$ yields $\|x-p\| = \|-2p\| = 2 > 1$, which is another contradiction. Therefore, we have $q = -p$.
\end{proof}

The next lemma  was also proved in \cite{LevZav} (Lemma 4.2) for $c$ and $\ell_{\infty}$, however, it also works for $c_0$.

\begin{lemma}\label{linf2}
    Let $F: S_{c_0} \rightarrow S_{c_0}$ be a non-expansive bijection, and let $x \in E$. Then $F(x) \in E$ and $F(-x) = -F(x)$.
\end{lemma}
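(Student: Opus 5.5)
The plan is to transport the characterization of Lemma~\ref{linf1} through $F^{-1}$ rather than through $F$. Non-expansiveness of $F$ means that distance inequalities of the form $\|u-v\|\le 1$ are preserved in the forward direction. Bijectivity lets us quantify over all points of $S_{c_0}$ as images.

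Fix $x\in E$ and put $p=F(x)$, $q=F(-x)$. We verify that the pair $p,q$ satisfies the condition of Lemma~\ref{linf1}. Let $y\in S_{c_0}$ be arbitrary. Since $F$ is surjective, there is $z\in S_{c_0}$ with $F(z)=y$. Applying the ``if'' part of Lemma~\ref{linf1} to the pair $x,-x$ (valid since $x\in E$), we get $\|z-x\|\le 1$ or $\|z+x\|\le 1$. In the first case non-expansiveness gives
\[
\|y-p\|=\|F(z)-F(x)\|\le\|z-x\|\le 1,
\]
and in the second case similarly $\|y-q\|\le \|z-(-x)\|\le 1$. Hence for every $y\in S_{c_0}$ at least one of $\|y-p\|\le1$, $\|y-q\|\le1$ holds.

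By the ``only if'' part of Lemma~\ref{linf1}, we conclude $p\in E$ and $q=-p$, that is, $F(x)\in E$ and $F(-x)=-F(x)$.

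There is essentially no obstacle; the only point needing care is that the argument uses surjectivity of $F$, so that every $y$ is of the form $F(z)$, and the ``only if'' direction of Lemma~\ref{linf1}, which is where the $c_0$-specific work has already been done. Injectivity is not even needed here.
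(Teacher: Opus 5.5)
Your proof is correct and follows the paper's argument: you push the dichotomy $\|z-x\|\le 1$ or $\|z+x\|\le 1$ forward through $F$ by non-expansiveness, use surjectivity so that the conclusion covers every point of $S_{c_0}$, and then invoke the ``only if'' direction of Lemma~\ref{linf1}. Your remark that only surjectivity is needed is accurate; at that step the paper simply writes ``$F$ is bijective''.
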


\begin{proof}
    For every $y \in S_{c_0}$ either $\|y-x\|\leq 1$ or $\|y+x\| \leq 1$, and the non-expansiveness of $F$ implies that $\min(\|F(y)-F(x)\|,\|F(y)-F(-x)\|)\leq 1$ for every $y \in S_{c_0}$. $F$ is bijective, so $\min(\|y-F(x)\|,\|y-F(-x)\|)\leq 1$ for every $y \in S_{c_0}$. Now all that remains is to apply the previous lemma.
\end{proof}

The next lemma was extracted from Lemma 3.2  in \cite{Leo}, where the same was proved for $B_{c_0}$.

\begin{lemma}\label{c0_step0}
    Let $F: S_{c_0}\rightarrow S_{c_0}$ be a non-expansive bijection. Then there is a bijection $\sigma\colon\mathbb{N}\to\mathbb{N}$ and a sequence $\alpha\colon\mathbb{N}\to\{-1,1\}$ such that for every $x\in S_{c_0}$ and $n\in\mathbb{N}$ the following holds:
    \begin{enumerate}[1)]
        \item if $x^n=0$, then $F(x)^{\sigma(n)}=0$;
        \item if $x^n<0$ and $\alpha^n=1$ ($\alpha^n=-1$), then $F(x)^{\sigma(n)}\leq0$ ($F(x)^{\sigma(n)}\geq0$);
        \item if $x^n>0$ and $\alpha^n=1$ ($\alpha^n=-1$), then $F(x)^{\sigma(n)}\geq0$ ($F(x)^{\sigma(n)}\leq0$).
    \end{enumerate}
\end{lemma}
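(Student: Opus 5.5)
By Lemma \ref{linf2}, $F$ maps $E$ into $E$ and commutes with $x\mapsto -x$ on $E$. We therefore define $\sigma$ and $\alpha$ by
\[
F(e_n)=\alpha^n e_{\sigma(n)}, \qquad F(-e_n)=-\alpha^n e_{\sigma(n)} .
\]
The first task is to show that $\sigma$ is a bijection.

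For injectivity, suppose $\sigma(n)=\sigma(m)$ with $n\neq m$. Then $F(\pm e_m)$ coincides with one of $F(\pm e_n)$, which contradicts injectivity of $F$. For surjectivity, take $k\in\mathbb N$. By surjectivity of $F$ there is $y$ with $F(y)=e_k$, and it suffices to show $y\in E$. Here we run the argument of Lemma \ref{linf2} backwards. For every $z$, either $\|z-e_k\|\le 1$ or $\|z+e_k\|\le1$. Let $y'=F^{-1}(-e_k)$. We want to use Lemma \ref{linf1} for the pair $y,y'$, but non-expansiveness only goes forward, so we argue differently. Every $w\in S_{c_0}$ is within distance $1$ of $e_j$ or of $-e_j$. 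Choose $j$ such that $y^j=0$, which is possible because $y\in c_0$. We expect some $e_j$ with $\sigma(j)=k$ to appear, since $\|F(y)\mp\alpha^j e_{\sigma(j)}\|$ equals $1$ when $\sigma(j)\ne k$. This route is messy, so the cleaner plan is to prove the sign properties first and then derive surjectivity from 1). Suppose $k\notin\sigma(\mathbb N)$. Then 1) shows that for every $x$ with finite support, the vector $F(x)$ has support inside $\sigma(\operatorname{supp} x)$. Moreover, every $x\in S_{c_0}$ has coordinates tending to $0$, and for $n$ with $x^n=0$ we get $F(x)^{\sigma(n)}=0$. From these facts we aim to show that $F(x)^k=0$ for all $x$, which contradicts $e_k\in F(S_{c_0})$.

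To see this, we expect to need a continuity or density argument. Note that $F$ is $1$-Lipschitz, hence continuous. If $F(x)^k=c\neq0$, compare $F(x)$ with $F(\pm e_n)$ for $n$ such that $x^n$ is tiny; this does not immediately help. Instead we use $x\mapsto$ distance estimates: $\|F(x)-F(e_n)\|\le \|x-e_n\|$. The cleanest way is to show the coordinate identity $F(x)^{k}=0$ directly via the key estimate below, applied to every coordinate outside $\sigma(\mathbb N)$.

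The key estimate is as follows. Fix $x$ and $n$, and set $\varepsilon=\alpha^n$. We have
\[
\|F(x)-\varepsilon e_{\sigma(n)}\|\le\|x-e_n\|, \qquad \|F(x)+\varepsilon e_{\sigma(n)}\|\le \|x+e_n\| .
\]
If $x^n=0$, both right-hand sides equal $1$. Hence $|F(x)^{\sigma(n)}\mp1|\le1$, which forces $F(x)^{\sigma(n)}=0$; this proves 1). If $x^n>0$, then $\|x-e_n\|\le1$, so $|F(x)^{\sigma(n)}-\varepsilon|\le1$, giving $\varepsilon F(x)^{\sigma(n)}\ge0$; this proves 3). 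Property 2) follows symmetrically. For the coordinates $k\notin\sigma(\mathbb N)$, these estimates give no information directly. There we must use surjectivity: take $y=F^{-1}(e_k)$. Since $e_k$ has norm-one coordinate $k$ and $0$ elsewhere, $F(y)^{\sigma(n)}=0$ for all $n$. Then non-expansiveness gives $\|y\mp e_n\|\ge \|e_k\mp\alpha^n e_{\sigma(n)}\|=1$, which is automatic and yields nothing. So the main obstacle is surjectivity of $\sigma$. The plan for it is to use the maps $F^{-1}$ restricted to $E$. By the same Lemma \ref{linf1} type argument applied to $F^{-1}(e_k)$ and $F^{-1}(-e_k)$, we try to show these two points form a pair satisfying the hypothesis of Lemma \ref{linf1}. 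That reduces to showing that every point is $1$-close to one of them. We would attempt this by expressing arbitrary points through finite-support approximations and continuity of $F$, and this is where we expect the real work to lie.
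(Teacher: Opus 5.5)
There is a genuine gap: you never prove that $\sigma$ is surjective, and that is the real content of the lemma. Your proofs of injectivity and of items 1)--3) are correct. Your direct estimate $\|F(x)\mp\alpha^n e_{\sigma(n)}\|\le\|x\mp e_n\|$ is even simpler than the paper's argument, since it works for every $x\in S_{c_0}$ without density.

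Your idea of deriving surjectivity ``from 1)'' is circular. Item 1), as you proved it, only concerns coordinates of the form $\sigma(n)$. So it does \emph{not} give $\operatorname{supp}F(x)\subseteq\sigma(\operatorname{supp}x)$, and a coordinate $k\notin\sigma(\N)$ is exactly what your estimates cannot see. Your other plan, showing that $F^{-1}(e_k)$ and $F^{-1}(-e_k)$ satisfy the hypothesis of Lemma~\ref{linf1}, is not carried out. It would require pulling distance bounds back through $F^{-1}$, which non-expansiveness does not allow.

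In fact no argument built only from what you use can work. The right shift $x\mapsto(0,x^1,x^2,\dots)$ is a non-expansive injection of $S_{c_0}$ into itself. It maps $E$ into $E$ with $F(-e)=-F(e)$ and satisfies all your estimates, yet $\sigma(n)=n+1$ is not onto. So surjectivity of $F$ on the whole sphere must be used again, beyond the conclusion of Lemma~\ref{linf2}.

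The paper does this by transferring a finite covering property through $F$:
\begin{enumerate}
\item For finitely supported $x$, set $l_n=-\sgn(x^n)e_n$ for $n\in\operatorname{supp}x$.
\item Every $t\in S_{c_0}$ lies within distance $1$ of $x$ or of some $l_n$. Indeed, if $\|t-l_n\|>1$ for all $n$, then $\sgn(t^n)=\sgn(x^n)$ on $\operatorname{supp}x$, hence $\|t-x\|\le1$.
\item Write an arbitrary $t$ as $t=F(s)$ and use non-expansiveness. Then every $t$ lies within $1$ of $F(x)$ or of some $F(l_n)=-\sgn(x^n)\alpha^n e_{\sigma(n)}$.
\item Take $z$ with $z^{\sigma(n)}=\sgn(x^n)\alpha^n$ for $n\in\operatorname{supp}x$, $z^k=\pm1$ for a fixed $k\notin\sigma(\operatorname{supp}x)$, and zeros elsewhere. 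Then $\|z-F(l_n)\|=2$ for every $n$, so $\|z-F(x)\|\le1$ for both choices of sign. This forces $F(x)^k=0$.
\item Applying this to $k\notin\sigma(\N)$ gives $F(x)^k=0$ on $S_{c_0}^*$. By density and continuity this holds on all of $S_{c_0}$, contradicting $e_k\in F(S_{c_0})$.
\end{enumerate}
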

\begin{proof}
    Let us fix an arbitrary $n\in \N$.  By Lemma \ref{linf2}, for the elements $e_n$ and $-e_n$, we have $F(e_n)= e_k$ or $F(e_n)=-e_k$,  and correspondingly, $F(-e_n)= -e_k$ or $F(-e_n)=e_k$ for some $k\in \N$. Define $\sigma(n)=k$ and $\alpha^n=(F(e_n))^k$. The mapping $\sigma$ defined in this way is injective due to the injectivity of $F$.

    In what follows, we show that conditions 1)--3) hold for all $x\in S_{c_0}$ and $n\in\N$.
    Since $S_{c_0}^*$ is dense in $S_{c_0}$ and $F$ is continuous, it suffices to verify these conditions only for $x\in S_{c_0}^*$, that is, for $x\in S_{c_0}$ with $|\mathtt{supp}(x)|<\infty$. 

    For every $n\in \mathtt{supp}(x)$ define $l_n=-\mathtt{sgn}(x^n)e_n$. Observe  that for every $t\in S_{c_0}$, either $\|t-x\|\leq 1$ or there exists $n\in \mathtt{supp}(x)$ such that $\|t-l_n\|\leq 1$. To demonstrate this, let us fix $t\in S_{c_0}$ and suppose $\|t-l_n\|> 1$ for every $n\in \mathtt{supp}(x)$.  By the definition of  $l_n$, this means $\mathtt{sgn}(x^nt^n)>0$ for every $n\in \mathtt{supp}(x)$ and thus, $\|t-x\|\leq 1$. Since $F$ is non-expansive and bijective, for every $t\in S_{c_0}$, either $\|t-F(x)\|\leq 1$ or there is $n\in \mathtt{supp}(x)$ such that $\|t-F(l_n)\|\leq 1$.
    Further, we will construct a special sequence $z\in S_{c_0}$ and show $\|z-F(x)\|\leq 1$. By the construction of $z$, the latter inequality will automatically imply items (2) and (3). Let
    \begin{equation*}
    z^k=\begin{cases} \mathtt{sgn}(x^n)\alpha^n, \text{ for }  k=\sigma(n), n\in \mathtt{supp}(x),\\
    0, \text{ otherwise}.
    \end{cases}
    \end{equation*}
    Observe by its definition, $z\in S_{c_0}$ and $\|z-F(l_n)\|> 1$ for every $n\in \mathtt{supp}(x)$, so  $\|z-F(x)\|\leq 1$, and thus items (2) and (3) are proved.     
    
    

    It remains to prove item (1). Let us fix $k\not\in \sigma(\mathtt{supp}(x))$ and change the definition of $z$ such that $z^k=1$. We still have $z\in S^*_{c_0}$ and  $\|z-F(l_n)\|> 1$ for every $n\in \mathtt{supp}(x)$. Thus, $\|z-F(x)\|\leq 1$, which implies $F(x)^k\geq 0$. On the other hand, setting $z^k=-1$, we have the opposite inequality  $F(x)^k\leq 0$. Thus, item (1) is proved.

    The last step is to prove surjectivity of $\sigma$. Let us argue by contradiction. Suppose there is $j\not\in\sigma(\N)$. From the above we have $F(x)^j=0$ for every $x\in S_{c_0}^*$. Using the continuity of $F$ and density of $S_{c_0}^*$ in $S_{c_0}$ once again, we obtain $F(x)^j=0$ for every $x\in S_{c_0}$. This contradiction to the surjectivity of $F$ completes the proof.
    
    
\end{proof}
The preceding lemma yields a bijection $\sigma \colon \mathbb{N} \to \mathbb{N}$ and a sequence $\alpha \colon \mathbb{N} \to \{-1,1\}$. Let us define the following isometric automorphism  $A$ of $c_0$ by 
$$(A(x))^{\sigma(n)} = \alpha^n x^n,$$
for all $n \in \mathbb{N}$.
Denote the restriction of this automorphism to $S_{c_0}$ by $G$. Observe that  $G$ is a bijective isometry of $S_{c_0}$ onto itself. To prove the plasticity of $S_{c_0}$, it suffices to show $F=G$, or equivalently, to show that $\widetilde{F} := G^{-1} \circ F$ is the identity map on $S_{c_0}$.
We shall establish this fact in what follows.
 By its definition, $\widetilde{F}$ is a non-expansive bijection from $S_{c_0}$ onto itself. Furthermore, Lemma \ref{c0_step0} implies that $\widetilde{F}$ acts as the identity on $E$ and satisfies the following properties (also extracted from analogous results for $B_{c_0}$ proved in \cite{Leo}).
\begin{lemma}\label{c0_step1}
    For each $x\in S_{c_0}$ and $n\in\N$,
    \begin{enumerate}[1)]
        \item if $x^n=0$, then $\widetilde{F}(x)^n=0$;
        \item if $x^n<0$, then $\widetilde{F}(x)^n\leq0$;
        \item if $x^n>0$, then $\widetilde{F}(x)^n\geq0$.
    \end{enumerate}
\end{lemma}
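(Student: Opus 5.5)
This is a direct translation of Lemma \ref{c0_step0} through the definition $\widetilde{F}=G^{-1}\circ F$. We only need to compute the coordinates of $G^{-1}(y)$ and then substitute $y=F(x)$.

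First we write $G^{-1}$ explicitly. Since $(A(x))^{\sigma(n)}=\alpha^n x^n$ and $\alpha^n\in\{-1,1\}$ (so $(\alpha^n)^{-1}=\alpha^n$), the inverse is
\begin{equation*}
(G^{-1}(y))^n=\alpha^n\, y^{\sigma(n)}\qquad (n\in\N,\ y\in S_{c_0}).
\end{equation*}
This is verified by plugging in: $(A(G^{-1}y))^{\sigma(n)}=\alpha^n\alpha^n y^{\sigma(n)}=y^{\sigma(n)}$. Since $\sigma$ is a bijection, this holds at every coordinate. Consequently, for every $x\in S_{c_0}$ and $n\in\N$,
\begin{equation*}
\widetilde{F}(x)^n=\alpha^n\, F(x)^{\sigma(n)}.
\end{equation*}

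Next we check the three items using the corresponding items of Lemma \ref{c0_step0}.

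For item 1), if $x^n=0$, then $F(x)^{\sigma(n)}=0$, so $\widetilde{F}(x)^n=0$.

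For item 2), let $x^n<0$. If $\alpha^n=1$, then $F(x)^{\sigma(n)}\le 0$, so $\widetilde{F}(x)^n=F(x)^{\sigma(n)}\le0$. If $\alpha^n=-1$, then $F(x)^{\sigma(n)}\ge0$, so $\widetilde{F}(x)^n=-F(x)^{\sigma(n)}\le 0$.

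Item 3) is symmetric to item 2), with the inequalities reversed.

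The statement in the text that $\widetilde{F}$ fixes $E$ follows the same way. Lemma \ref{c0_step0}'s construction gives $F(e_n)=\alpha^n e_{\sigma(n)}$, hence $\widetilde{F}(e_n)=e_n$. Lemma \ref{linf2} gives $F(-e_n)=-F(e_n)$, hence $\widetilde{F}(-e_n)=-e_n$.

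There is no substantive obstacle. The only point needing care is the index bookkeeping: correctly inverting $A$, so that the sign $\alpha^n$ and the permuted index $\sigma(n)$ are attached to the right coordinate. This is settled by the explicit formula for $G^{-1}$ above.
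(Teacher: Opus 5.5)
Your proof is correct and takes the same approach as the paper. The paper gives no separate argument; it only says that Lemma \ref{c0_step0} implies these properties through $\widetilde{F}=G^{-1}\circ F$, and your formula $\widetilde{F}(x)^n=\alpha^n F(x)^{\sigma(n)}$, together with the case check on $\alpha^n$, is exactly the routine unwinding that claim relies on.
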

\begin{lemma}\label{c0_step1a}
    For each $y\in S_{c_0}$ and $n\in\N$, if $y^n<0$, then $\widetilde{F}^{-1}(y)^n<0$, and if $y^n>0$, then $\widetilde{F}^{-1}(y)^n>0$.
\end{lemma}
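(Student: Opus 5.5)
The statement is a direct consequence of Lemma \ref{c0_step1} read through the bijection $\widetilde{F}$, by a contrapositive case analysis; no new construction is needed. First I would record briefly why Lemma \ref{c0_step1} holds, since the paper states it without proof. We have $\widetilde{F}=G^{-1}\circ F$ and $(G^{-1}(w))^n=\alpha^n w^{\sigma(n)}$. So $\widetilde{F}(x)^n=\alpha^n F(x)^{\sigma(n)}$. Items 1)--3) of Lemma \ref{c0_step0} then give items 1)--3) of Lemma \ref{c0_step1} after multiplying by $\alpha^n\in\{-1,1\}$.

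Now fix $y\in S_{c_0}$ and $n\in\N$. Put $x=\widetilde{F}^{-1}(y)$, which exists because $\widetilde{F}$ is a bijection of $S_{c_0}$. Then $\widetilde{F}(x)=y$.

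Suppose $y^n>0$. We show $x^n>0$ by excluding the other two possibilities.
\begin{enumerate}[(a)]
\item If $x^n=0$, item 1) of Lemma \ref{c0_step1} gives $y^n=\widetilde{F}(x)^n=0$, a contradiction.
\item If $x^n<0$, item 2) gives $y^n\le 0$, again a contradiction.
\end{enumerate}
Hence $x^n>0$. The case $y^n<0$ is symmetric: $x^n=0$ is excluded by item 1), and $x^n>0$ is excluded by item 3), which would force $y^n\ge 0$. Therefore $x^n<0$.

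I do not expect any real obstacle. The only point needing care is that the strict inequalities for $\widetilde{F}^{-1}$ come from combining item 1) with items 2)--3); the non-strict inequalities alone would not suffice. Bijectivity of $\widetilde{F}$ is used only to make $\widetilde{F}^{-1}(y)$ well defined.
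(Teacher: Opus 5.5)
Your proof is correct and is the derivation the paper leaves implicit. The paper states Lemma \ref{c0_step1a} without proof as a consequence of Lemma \ref{c0_step0}, via Lemma \ref{c0_step1}. Your computation $\widetilde{F}(x)^n=\alpha^n F(x)^{\sigma(n)}$ and your case analysis on $x=\widetilde{F}^{-1}(y)$, using item 1) to rule out $x^n=0$ and items 2)--3) to rule out the wrong sign, fill in that step exactly.
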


The next lemma is extracted from Lemma 4.7 from \cite{LevZav}. This property was established for $\ell_{\infty}$ and $c$. However, almost the same argument works for $c_0$.
\begin{lemma}\label{linf7} Let
    $x\in S_{c_0}$, $y=\widetilde{F}(x)$. Then for every $k \in \N$ the following inequalities hold:
    \begin{equation}\label{ineq1}
        |y^k|\leq |x^k|,
    \end{equation}
    and
    \begin{equation}\label{ineq2}
        \mathtt{sgn}(x^k)  \mathtt{sgn}(y^k) \geq 0.
    \end{equation}
\end{lemma}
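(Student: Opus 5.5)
Inequality \eqref{ineq2} follows at once from Lemma~\ref{c0_step1}: if $x^k=0$ then $y^k=0$, and if $x^k\neq 0$ then $y^k$ is either $0$ or has the same sign as $x^k$. So the work is in \eqref{ineq1}. The plan is to compare $x$ with a suitably chosen point of $E$, on which $\widetilde{F}$ acts as the identity, and then use non-expansiveness.

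Fix $k$. If $x^k=0$, then $y^k=0$ by Lemma~\ref{c0_step1} and there is nothing to prove. Assume $x^k>0$; the case $x^k<0$ is symmetric, or follows by replacing $x$ with the image under the isometry that flips the sign of the $k$-th coordinate, which commutes with the structure of Lemma~\ref{c0_step1}. If $x^k=1$, then \eqref{ineq1} is trivial since $|y^k|\le\|y\|=1$. So assume $0<x^k<1$.

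The key step is to choose a point $w\in S_{c_0}$ that is fixed by $\widetilde{F}$ and is far from $x$ only through coordinate $k$. The natural choice is $w=-e_k\in E$, so $\widetilde{F}(w)=w$. We have
\[
\|x-(-e_k)\|=\max\Bigl(1+x^k,\ \sup_{j\neq k}|x^j|\Bigr)=1+x^k,
\]
because $\sup_{j\ne k}|x^j|\le 1<1+x^k$. Non-expansiveness gives
\[
\|y+e_k\|\le 1+x^k .
\]
Since $y^k\ge 0$ by Lemma~\ref{c0_step1}, the $k$-th coordinate of $y+e_k$ is $1+y^k$. Hence $1+y^k\le 1+x^k$, that is, $y^k\le x^k$. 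Together with $y^k\ge0$ this yields $|y^k|\le|x^k|$.

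The negative case uses $w=e_k$ in the same way. The only step needing care is the norm computation $\|x+e_k\|=1+x^k$, which relies on all coordinates of $x$ being at most $1$ in absolute value. I do not expect a real obstacle; the main point is to realise that the sign information from Lemma~\ref{c0_step1} is what lets the $k$-th coordinate of $y\mp e_k$ realise the full value $1+|y^k|$.
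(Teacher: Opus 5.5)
Your proof is correct, but it reaches \eqref{ineq1} by a different mechanism than the paper. The paper uses both fixed points $\pm e_k$ at once. For $|x^k|<1$ one has $\|x-e_k\|=\max(1-x^k,1)$ and $\|x+e_k\|=\max(1+x^k,1)$, hence $|x^k|=\|x-e_k\|+\|x+e_k\|-2$. Non-expansiveness first gives $\max(\|y-e_k\|,\|y+e_k\|)<2$, i.e.\ $|y^k|<1$, so the same identity holds for $y$. Adding the two non-expansiveness inequalities then yields $|y^k|\le|x^k|$. In that argument no sign information is used for \eqref{ineq1}; Lemma \ref{c0_step1} enters only for \eqref{ineq2}.

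You instead use only the reference point on the opposite side of $x^k$. This gives the one-sided bound $y^k\le x^k$ (for $x^k>0$), and you close the other side with $y^k\ge 0$ from Lemma \ref{c0_step1}. Your version is shorter and skips the preliminary step $|y^k|<1$. The price is that \eqref{ineq1} now depends on the sign-preservation property, and hence on the density and continuity argument behind Lemma \ref{c0_step0}. The paper's version, by contrast, is a purely metric fact valid for any non-expansive self-map of $S_{c_0}$ fixing $\pm e_k$.

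You can remove that dependence by also applying non-expansiveness at $e_k$. For $0\le x^k<1$ one has $\|x-e_k\|=1$, which forces $y^k\ge 0$. Your one-point bound then becomes self-contained, with no appeal to Lemma \ref{c0_step1}.
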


\begin{proof}
    Let $k \in \N$ be such that $|x^k|<1$, i.e. $\mathtt{max}(\|x-e_k\|,\|x+e_k\|)<2$. The non-expansiveness of $\widetilde{F}$ then implies that $\mathtt{max}(\|y-e_k\|,\|y+e_k\|)<2$, i.e. $|y^k|<1$, and 
    \[|x^k|=\|x-e_k\|+\|x+e_k\|-2\geq \|y-e_k\|+\|y+e_k\|-2=|y^k|.\]
    This proves (\ref{ineq1}) for all $k\in \N$ such that $|x^k|<1$, and (\ref{ineq1}) is obviously true for all $k\in \N$ such that $|x^k|=1$. The claim (\ref{ineq2}) follows directly from  Lemma \ref{c0_step1}.
\end{proof}

\section{Main results}
Let us consider the set $K$ of points with finite support, such that for any $x\in K$   and any   $x^i\in \mathtt{supp}(x)$, $|x^i|=1$.
\begin{lemma}\label{c0_identity_K}
The map $\widetilde{F}$ is the identity map on $K$.
\end{lemma}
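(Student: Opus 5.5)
The plan is to work with the preimage $x' := \widetilde{F}^{-1}(x)$ rather than with $y=\widetilde{F}(x)$. Showing $x'=x$ is equivalent to the claim, since $\widetilde{F}$ is a bijection and then $\widetilde{F}(x)=\widetilde{F}(x')=x$. Throughout, fix $x\in K$ with support $S=\mathtt{supp}(x)$, so that $|x^n|=1$ for all $n\in S$.

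\emph{Step 1: $x'$ agrees with $x$ on $S$.} By Lemma \ref{c0_step1a} applied to $x=\widetilde{F}(x')$, for $n\in S$ the coordinate $x'^n$ is nonzero and has the sign of $x^n$. By Lemma \ref{linf7} applied to the pair $(x',\widetilde{F}(x'))=(x',x)$, we have $1=|x^n|\le |x'^n|\le 1$. Hence $x'^n=x^n$ for every $n\in S$.

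\emph{Step 2: $x'$ vanishes off $S$.} Suppose, towards a contradiction, that $x'^j=t\neq 0$ for some $j\notin S$. I would exploit the covering property from the proof of Lemma \ref{c0_step0}, with $l_n=-\mathtt{sgn}(x'^n)e_n$ ranging over the support of $x'$ (first reducing to finite support of $x'$ by a perturbation or density argument). Every point is within distance $1$ either of $x'$ or of some $l_n$, and since $\widetilde{F}$ fixes $E$ and is non-expansive and bijective, every point is then within distance $1$ either of $x=\widetilde{F}(x')$ or of some $l_n$. Now test this against the vector
\[
z=\sum_{n\in S}\mathtt{sgn}(x^n)e_n-\mathtt{sgn}(t)\,e_j ,
\]
which is again in $K$. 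For $n\in S$ we get $\|z-l_n\|=2$, because $z^n=\mathtt{sgn}(x'^n)$ by Step 1. We also get $\|z-l_j\|=2$, because $z^j$ and $(l_j)^j=-\mathtt{sgn}(t)$ coincide and are both of modulus one. Hence $z$ must satisfy $\|z-x\|\le 1$, and indeed it does, so this naive choice gives no contradiction. It has to be refined: the extra index $j$ should be used to place the test point far from $x$ while keeping it close to no $l_n$. Concretely, I would try test points of the form $z$ above with an additional coordinate $z^m=\pm1$ for an index $m\notin S\cup\{j\}$ outside the (finite) support of $x'$, in the spirit of the proof of item 1) in Lemma \ref{c0_step0}.

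\emph{Alternative route.} If Step 2 resists, I would run an induction on $|S|$. The base case $|S|=1$ is $x\in E$, which is known. For the inductive step, compare $x$ with the points $x-x^ke_k\in K$, $k\in S$, which are fixed by the inductive hypothesis, and with $-x^ke_k\in E$. Non-expansiveness gives $\|y-(x-x^ke_k)\|\le 1$ and $\|y+x^ke_k\|\le 2$, where $y=\widetilde{F}(x)$. Combined with Lemma \ref{c0_step1} these give the sign and support information, but they do not give the magnitudes $|y^k|=1$. For those one needs a lower bound, i.e.\ an argument through $\widetilde{F}^{-1}$ as in Step 1.

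The main obstacle is exactly this transfer of lower bounds: non-expansiveness only propagates upper distance bounds forward. I expect the decisive step to be Step 2, ruling out extra nonzero coordinates of $\widetilde{F}^{-1}(x)$ off $S$. The refined choice of test vectors there is the point I am least sure of.
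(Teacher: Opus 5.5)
There is a genuine gap. Your Step 1 is correct, but the proof stops there: Step 2 is left open, and the method you propose for it cannot close it. For $z\in S_{c_0}$ one has $\|z-l_n\|>1$ exactly when $z^n\,\mathtt{sgn}(x'^n)>0$. Suppose $z$ is at distance greater than $1$ from every $l_n$, $n\in\mathtt{supp}(x')$. By Step 1 it then satisfies $z^nx^n>0$ for $n\in S$, hence $|z^n-x^n|<1$ there. For $n\notin S$ we have $|z^n-x^n|=|z^n|\le 1$. Thus $\|z-x\|\le 1$ automatically.

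In other words, the covering statement you transport through $\widetilde F$ holds for $x$ regardless of what $x'$ does off $S$. So no test vector can produce a contradiction. This includes your refined ones with an extra coordinate $z^m=\pm1$, since there $|z^m-x^m|=1$. Your alternative route fails for the reason you give yourself: it only yields upper bounds on $|y^k|$.

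The missing idea is that the lower bound does not need $\widetilde F^{-1}$ at all. It comes from the fact that $\widetilde F$ takes values in the sphere rather than the ball, and this is what the paper exploits.

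Let $y=\widetilde F(x)$. By Lemmas \ref{c0_step1} and \ref{linf7}, $y$ is supported in $S$, with $|y^k|\le 1$ and the signs of $x$. Fix $i\in S$ and $\varepsilon\in(0,1)$. Let $\widetilde x$ coincide with $x$ at $i$, equal $(1-\varepsilon)x^k$ for $k\in S\setminus\{i\}$, and vanish elsewhere. Then $\widetilde x\in S_{c_0}$ and $\|x-\widetilde x\|\le\varepsilon$. By the same two lemmas, every coordinate of $\widetilde F(\widetilde x)$ except the $i$-th has modulus at most $1-\varepsilon$. Since $\|\widetilde F(\widetilde x)\|=1$, this forces $|\widetilde F(\widetilde x)^i|=1$. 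Non-expansiveness then gives $|y^i|\ge 1-\varepsilon$. Letting $\varepsilon\to 0$ and using the sign information, $y^i=x^i$ for every $i\in S$, so $\widetilde F(x)=x$.

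The paper phrases this as a contradiction inside an induction on $|S|$, but the inductive hypothesis is never actually used. Your Step 2 then follows at once from injectivity, since $\widetilde F(x')=x=\widetilde F(x)$.
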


\begin{proof} We will proceed by induction on the cardinality of the support. The base case of induction holds since $\widetilde{F}$ is the identity on $E$. Now, suppose for any $a\in K$ with $|\mathtt{supp} (a)|\leq n-1$, $\widetilde{F}(a)=a$. Further, let us argue by contradiction. Without loss
of generality we may consider 
$$x=(x^1,x^2,...,x^n,0,...),$$ 
where $|x^i|=1$ for all $i\leq n$,  and suppose $\widetilde{F}(x)\neq x$.
 By the previous results, one has
$$\widetilde{F}(x)=(y^1, y^2,..., y^n,0...),$$
where, at least for some $1\leq i\leq n$, $0<|y^i|<1$ and all $y^i$ have the same signs as the corresponding $x^i$. Consider  the following element
$$\widetilde{x}=(\sgn(x^1)(1-\varepsilon),...,\sgn(x^{i-1})(1-\varepsilon),\sgn(x^i),\sgn(x^{i+1})(1-\varepsilon),...,\sgn(x^n)(1-\varepsilon),0,...),$$ 
with some $\varepsilon>0$. By Lemma \ref{linf7}, $|\widetilde{F}(\widetilde{x})^k| \leq (1-\varepsilon)$, for all $k\neq i$. Moreover, 
$$\|\widetilde{F}(x)-\widetilde{F}(\widetilde{x})\|=\mathtt{max}_{1\leq k\leq n}\{|\widetilde{F}(x)^k-\widetilde{F}(\widetilde{x})^k|\}\leq \|x-\widetilde{x}\|=\varepsilon.$$
Thus, in particular,
$$|\widetilde{F}(x)^i-\widetilde{F}(\widetilde{x})^i|\leq\varepsilon,$$
and since  $\varepsilon$ can be chosen arbitrarily  small, we can achieve $|\widetilde{F}(\widetilde{x})^i|<1$.
We obtain the contradiction $\widetilde{F}(\widetilde{x}) \notin S_{c_0}$, which completes the proof. 

\end{proof}
\begin{lemma}\label{one_coord}
Let $x$ and $y$ differ in one single coordinate. Moreover, $|x^i|>|y^i|$  and $\widetilde{F}(x)=x$. Then $\widetilde{F}(y)^i=y^i$.
\end{lemma}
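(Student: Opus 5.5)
The plan is to combine non-expansiveness at the fixed point $x$ with the coordinatewise control of Lemma \ref{linf7}. Put $w=\widetilde{F}(y)$. Since $\widetilde{F}(x)=x$ and $x,y$ differ only in the $i$-th coordinate, non-expansiveness gives
$$\|w-x\| = \|\widetilde{F}(y)-\widetilde{F}(x)\|\le \|y-x\| = |x^i-y^i|,$$
and in particular
$$|w^i-x^i|\le |x^i-y^i|.$$
Lemma \ref{linf7} supplies the other two facts we need: $|w^i|\le |y^i|$, and $w^i$ is zero or has the same sign as $y^i$.

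\textbf{Trivial and same-sign cases.} If $y^i=0$, then $w^i=0=y^i$ by Lemma \ref{c0_step1}, so there is nothing to prove. Now assume $y^i\neq 0$ has the same sign as $x^i$. By the symmetry $x\mapsto -x$ on a single coordinate (or by repeating the argument with the signs reversed) we may take $0<y^i<x^i$. The displayed inequality gives
$$w^i\ge x^i-(x^i-y^i)=y^i>0.$$
Lemma \ref{linf7} gives $w^i=|w^i|\le |y^i|=y^i$. Hence $w^i=y^i$. This is the core of the argument and is a two-line squeeze.

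\textbf{Opposite-sign case: the main obstacle.} Suppose $x^i>0>y^i$ with $|y^i|<x^i$. The same inequality now only yields
$$w^i\ge x^i-(x^i+|y^i|)=y^i,$$
which adds nothing to Lemma \ref{linf7}. I expect this case either to be excluded by the intended reading of the lemma (that $y^i$ lies between $0$ and $x^i$, which is how the lemma will be applied to push coordinates of fixed points toward $0$), or to need an extra step. My first attempt at that step is to pass through the point $x'$ obtained from $x$ by replacing $x^i$ with $0$.
\begin{enumerate}
\item If $x'$ lies on the sphere, it is fixed by the same-sign case applied in the limit, or directly by Lemma \ref{c0_step1} together with the squeeze above, since $x$ and $x'$ differ in one coordinate.
\item Next I would apply the same-sign argument to the pair $(-\text{reflected }x', y)$. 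This requires knowing that some point with $i$-th coordinate of sign $\sgn(y^i)$ and modulus greater than $|y^i|$, and all other coordinates equal to those of $y$, is fixed.
\end{enumerate}
Producing such a fixed point from $x$ alone is exactly the difficulty, and it is where I would expect to use Lemma \ref{c0_step1a} on $\widetilde{F}^{-1}$ or Lemma \ref{c0_identity_K}. If that fails, I would state and use the lemma only under the natural hypothesis $\sgn(y^i)\,\sgn(x^i)\ge 0$, which is fully covered by the squeeze argument above.
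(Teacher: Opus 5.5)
Your same-sign squeeze is exactly the paper's proof of Lemma~\ref{one_coord}. The paper takes $|\widetilde{F}(y)^i|\le|y^i|$ and the sign information from Lemma~\ref{linf7}, adds $|x^i-\widetilde{F}(y)^i|\le\|\widetilde{F}(y)-\widetilde{F}(x)\|\le\|y-x\|=|x^i-y^i|$, and concludes.

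Your diagnosis of the opposite-sign case is correct, and it is a hole in the paper's proof as much as in yours. For instance, $x^i=0.8$, $y^i=-0.5$, $\widetilde{F}(y)^i=-0.3$ satisfies every relation the paper writes down. So its closing sentence ``together, these conditions give us the needed conclusion'' is valid only when $\sgn(x^i)\sgn(y^i)\ge 0$. The paper only invokes the lemma in that situation: in the proof of the theorem the fixed point is $\widetilde{x}$ with $\widetilde{x}^k=\sgn(x^k)$, and the moved coordinate goes from $\sgn(x^i)$ to $x^i$, which lies between $0$ and $\sgn(x^i)$. Hence your restricted version is precisely what the paper proves and what it uses. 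The unrestricted statement follows only a posteriori, once $\widetilde{F}$ is known to be the identity.

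Your sketched repair of the opposite-sign case does not work, and you should drop it rather than keep it as a plan.
\begin{enumerate}
\item In step (1), the squeeze (or Lemma~\ref{c0_step1}) gives only $\widetilde{F}(x')^i=0$. For $k\neq i$ you know merely that $\sgn(x^k)\,\widetilde{F}(x')^k\ge 0$, $|\widetilde{F}(x')^k|\le|x^k|$ and $|\widetilde{F}(x')^k-x^k|\le|x^i|$. None of this forces $\widetilde{F}(x')^k=x^k$, so $x'$ is not known to be a fixed point. The lemma controls a single coordinate and does not by itself produce new fixed points; the same caveat applies whenever one tries to chain it.
\item Step (2) would additionally need the reflection of $x$ in the $i$-th coordinate to be fixed. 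Neither Lemma~\ref{c0_step1a} nor Lemma~\ref{c0_identity_K} provides that.
\end{enumerate}
The most Lemma~\ref{c0_identity_K} gives here is a special case. Suppose $y$ has finite support and $|y^i|=\min_{k\in\mathtt{supp}(y)}|y^k|$. Compare $y$ with the fixed point $u\in K$ defined by $u^k=\sgn(y^k)$. This yields $1-|\widetilde{F}(y)^i|\le\|y-u\|=1-|y^i|$, hence $\widetilde{F}(y)^i=y^i$ independently of $x$. This still does not cover the general opposite-sign case.
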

\begin{proof}
By Lemma \ref{linf7}, we have 
$$|\widetilde{F}(y)^i|\leq|y^i|,$$
and they have the same signs.
On the other hand
$$|x^i-y^i|=\|x-y\|\geq \|\widetilde{F}(x)-\widetilde{F}(y)\|=\|\widetilde{F}(y)-x\|\geq |x^i-\widetilde{F}(y)^i|.$$
Together, these conditions give us the needed conclusion.
\end{proof}
\begin{theorem}
$\widetilde{F}$ is the identity map on $S_{c_0}$.
\end{theorem}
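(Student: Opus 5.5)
By continuity of $\widetilde{F}$ and the density of $S_{c_0}^*$ in $S_{c_0}$, it suffices to show that $\widetilde{F}(y)=y$ for every $y\in S_{c_0}^*$. We argue by induction along a chain that starts at a point of $K$. There $\widetilde{F}$ is the identity by Lemma \ref{c0_identity_K}, and we move one coordinate at a time, using Lemma \ref{one_coord} at each step.

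Fix $y\in S_{c_0}^*$ with support $\{1,\dots,n\}$ after relabelling. Since $\|y\|=1$, some index $j_0$ has $|y^{j_0}|=1$. Let $x_0\in K$ be the point with $x_0^k=\sgn(y^k)$ for $k\le n$ and $x_0^k=0$ otherwise. Then $\widetilde{F}(x_0)=x_0$. We list the indices $k\neq j_0$ with $|y^k|<1$ as $k_1,\dots,k_m$, and define $x_r$ from $x_{r-1}$ by replacing the $k_r$-th coordinate $\sgn(y^{k_r})$ with $y^{k_r}$. Every $x_r$ lies in $S_{c_0}$ because the $j_0$-th coordinate stays $\pm1$, and $x_m=y$.

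At step $r$ the points $x_{r-1}$ and $x_r$ differ only in coordinate $k_r$, and $|x_{r-1}^{k_r}|=1>|x_r^{k_r}|$. Lemma \ref{one_coord} therefore gives only $\widetilde{F}(x_r)^{k_r}=x_r^{k_r}$. The main obstacle is that we also need $\widetilde{F}(x_r)$ to agree with $x_r$ at the other coordinates, since the next step requires $\widetilde{F}(x_r)=x_r$.

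To settle the other coordinates, let $z=\widetilde{F}(x_r)$. For $k$ outside the support, $z^k=0$ by Lemma \ref{c0_step1}. For $k\ne k_r$ with $|x_r^k|=1$, the non-expansive bound $\|z-x_{r-1}\|=\|\widetilde{F}(x_r)-\widetilde{F}(x_{r-1})\|\le |x_{r-1}^{k_r}-y^{k_r}|=1-|y^{k_r}|$ forces $|z^k|\ge |y^{k_r}|$, which is not enough on its own. The cleaner route is to choose the order of the chain so that the already-treated coordinates are fixed by a second application of Lemma \ref{one_coord}. For an already-lowered coordinate $k_s$ with $s<r$, consider the point $w$ which equals $x_r$ except that $w^{k_s}=\sgn(y^{k_s})$. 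The point $w$ is of the same kind as the earlier stages, with a different set of lowered coordinates. So we run the induction on the number of lowered coordinates over all orderings simultaneously.

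The statement proved by induction on $m$ is: every finitely supported $x\in S_{c_0}$ having $m$ coordinates with $0<|x^k|<1$ is fixed by $\widetilde{F}$. The case $m=0$ is Lemma \ref{c0_identity_K}. For the step, take $x$ with $m$ such coordinates. For each such coordinate $k$, let $x_{(k)}$ be $x$ with $x^k$ raised to $\sgn(x^k)$. This point has $m-1$ lowered coordinates, so $\widetilde{F}(x_{(k)})=x_{(k)}$ by induction. Lemma \ref{one_coord} then gives $\widetilde{F}(x)^k=x^k$, and this holds for every lowered coordinate $k$.

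It remains to handle the coordinates $j$ with $|x^j|=1$. Lemma \ref{linf7} gives $|z^j|\le1$ with the correct sign, so only $|z^j|=1$ needs proof. Here we use the bijectivity of $\widetilde{F}$ together with Lemma \ref{c0_step1a}. Let $u=\widetilde{F}^{-1}(x)$. By Lemma \ref{c0_step1a} and Lemma \ref{linf7}, $u$ has the same support and signs as $x$, and $|u^k|\ge|x^k|$ for all $k$. If $u\ne x$, then some lowered coordinate of $u$ is strictly larger in modulus than the corresponding coordinate of $x$. Comparing $u$ with the fixed points $x_{(k)}$ through non-expansiveness, $\|x-x_{(k)}\|\le\|u-x_{(k)}\|$, should force $u=x$, and then $\widetilde{F}(x)=x$.

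I expect this last comparison to be the delicate step. If it fails, the fallback is to apply Lemma \ref{one_coord} coordinatewise to $\widetilde{F}^{-1}$, which is non-contractive. Finally, density of $S_{c_0}^*$ and continuity of $\widetilde{F}$ give $\widetilde{F}=\mathrm{id}$ on all of $S_{c_0}$.
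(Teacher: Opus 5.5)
\textbf{There is a genuine gap: the coordinates of modulus $1$ are never handled.} Your reduction to $S_{c_0}^*$ and your induction on the number $m$ of coordinates with $0<|x^k|<1$ follow the paper's strategy of raising one coordinate to its sign at a time. The lowered coordinates are handled correctly: $x_{(k)}$ is fixed by the induction hypothesis, so Lemma \ref{one_coord} gives $\widetilde F(x)^k=x^k$. But the induction only closes if you also show $\widetilde F(x)^j=x^j$ whenever $|x^j|=1$. Your argument through $u=\widetilde F^{-1}(x)$ is never finished, and as sketched it does not close, for three reasons.
\begin{enumerate}
\item Lemma \ref{c0_step1a} and Lemma \ref{linf7} only give $\mathtt{supp}(x)\subseteq\mathtt{supp}(u)$ with matching signs and $|u^k|\ge|x^k|$. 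Nothing rules out $u^k\neq0$ where $x^k=0$, so ``same support'' is unjustified.
\item The comparisons $\|x-x_{(k)}\|\le\|u-x_{(k)}\|$ cannot by themselves force $u=x$. Take $x=(1,c,c,0,\dots)$ with $0<c<1$ and $u=(1,1,1,0,\dots)$. Then $\|x-x_{(2)}\|=\|u-x_{(2)}\|=1-c$, and likewise for $x_{(3)}$. So every inequality you name holds although $u\neq x$; this $u$ is excluded only by a different fixed point, namely $u\in K$ itself.
\item The fallback does not help. Lemma \ref{one_coord} rests on the upper bound $\|\widetilde F(x)-\widetilde F(y)\|\le\|x-y\|$, whereas non-contractivity of $\widetilde F^{-1}$ only supplies lower bounds.
\end{enumerate}

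\textbf{The missing idea is the sphere constraint.} It is used exactly as in the proof of Lemma \ref{c0_identity_K}; the paper's ``obviously equal to 1'' in the two-coordinate case and its ``Analogously'' in general rely on it. Fix $j$ with $|x^j|=1$ and $0<\varepsilon<1$. Set $w^j=x^j$ and $w^k=\sgn(x^k)\min(|x^k|,1-\varepsilon)$ for $k\neq j$. Then $\|w-x\|\le\varepsilon$. By Lemma \ref{linf7}, every coordinate of $\widetilde F(w)$ other than the $j$-th has modulus at most $1-\varepsilon$. Since $\widetilde F(w)\in S_{c_0}$ and the sup-norm of a $c_0$ sequence is attained, $|\widetilde F(w)^j|=1$. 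Non-expansiveness then gives $|\widetilde F(x)^j|\ge 1-\varepsilon$. Letting $\varepsilon\to0$ and using the sign information of Lemma \ref{linf7}, you get $\widetilde F(x)^j=x^j$. This needs neither the induction nor $\widetilde F^{-1}$. Combined with your treatment of the lowered coordinates and Lemma \ref{c0_step1} for the zero coordinates, it completes the proof.
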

\begin{proof}
Again, since $\widetilde{F}$ is continuous and $S_{c_0}^*$ is dense in $S_{c_0}$, it suffices to  consider only elements  $x\in S_{c_0}^*$, that is, $|\mathtt{supp}(x)|< \infty$. 
Let us show $\widetilde{F}(x)=x$ gradually increasing the number of nonzero coordinates $x^k$. 
If $x$ has only one nonzero coordinate, then $x=\pm e_k$ for some $k\in \N$ and there is nothing to prove. If $x$ has two nonzero coordinates, the case with moduli of these coordinates equal to 1 is covered by Lemma \ref{c0_identity_K}. Now consider x of the following form
$$x=(x^1,x^2,0,\dots),$$
with $|x^1|=1$ and $|x^2|<1$.
  Then $|\widetilde{F}(x)^1|$ is obviously equal to 1. Consider $\widetilde{x}$ with $\widetilde{x}^k=
\mathtt{sgn}x^k$ and use Lemma \ref{one_coord}. We have
$\widetilde{F}(x)^2= x^2$. 

Analogously, by changing only one coordinate to its sign at each step, we may show $\widetilde{F}(x)=x$ for any $x \in S_{c_0}^*$ with arbitrarily many coordinates of modulus 1 and any finite number of coordinates whose moduli are less than 1. 

Thus, $\widetilde{F}(x) = x$ holds for all $x \in S_{c_0}^*$. Finally, by the density of $S_{c_0}^*$ and the continuity of $\widetilde{F}$, we conclude that $\widetilde{F}$ is the identity map on the entire sphere $S_{c_0}$.
\end{proof}

\bibliographystyle{alpha}
\bibliography{sample}

\end{document}